\newtheorem{Theorem}{Theorem}[section]
\newtheorem{Lemma}[Theorem]{Lemma}
\newtheorem{Corollary}[Theorem]{Corollary}
\theoremstyle{definition}
\newtheorem{Definition}[Theorem]{Definition}
\newtheorem{Conjecture}[Theorem]{Conjecture}
\newcommand{\rca}{\mathsf{RCA}_0}
\newcommand{\wkl}{\mathsf{WKL}_0}
\newcommand{\aca}{\mathsf{ACA}_0}
\newcommand{\stet}{\mathsf{sTET}_{[0,1]}}
\newcommand{\rec}{\textup{REC}}
\DeclareMathOperator{\wks}{\mathsf{WeakSystem}}
\DeclareMathOperator{\sts}{\mathsf{StrongSystem}}
\DeclareMathOperator{\dom}{\mathrm{dom}}
\newcommand{\andd}{\wedge}
\newcommand{\orr}{\vee}
\newcommand{\la}{\langle}
\newcommand{\ra}{\rangle}
\newcommand{\da}{{\downarrow}}
\newcommand{\ua}{{\uparrow}}
\newcommand{\imp}{\rightarrow}
\newcommand{\biimp}{\leftrightarrow}
\newcommand{\Nb}{\mathbb{N}}
\newcommand{\Qb}{\mathbb{Q}}
\newcommand{\Rb}{\mathbb{R}}
\title{The reverse mathematics of the Tietze extension theorem}
\author{Paul Shafer}
\address{Department of Mathematics\\
Ghent University\\
Krijgslaan 281 S22\\
B-9000 Ghent\\
Belgium}
\email{paul.shafer@ugent.be}
\urladdr{http://cage.ugent.be/~pshafer/}
\thanks{Paul Shafer is an FWO Pegasus Long Postdoctoral Fellow.}
\date{\today}
\begin{document}

\begin{abstract}
We prove that several versions of the Tietze extension theorem for functions with moduli of uniform continuity are equivalent to $\wkl$ over $\rca$.  This confirms a conjecture of Giusto and Simpson~\cite{GiustoSimpson} that was also phrased as a question in Montalb\'an's \emph{Open questions in reverse mathematics}~\cite{Montalban}.
\end{abstract}

\maketitle

\section{Introduction}

The \emph{Tietze extension theorem} states that if $X$ is a metric space, $C \subseteq X$ is closed, and $f \colon C \imp \Rb$ is continuous, then there is a continuous $F \colon X \imp \Rb$ extending $f$ (meaning that $F(x) = f(x)$ for all $x \in C$).  It is a fundamental theorem of real analysis and topology, and, as such, the question of its logical strength is natural and ripe for consideration.  In this work, we analyze the logical strengths of formalized versions of the Tietze extension theorem in the setting of \emph{reverse mathematics}, a foundational program designed by Friedman to classify mathematical theorems according to the strengths of the axioms required to prove them~\cite{Friedman}.

In reverse mathematics, we fix a weak base axiom system $\wks$ for second-order arithmetic and consider the implications that are provable in $\wks$.  If $\varphi$ and $\psi$ are two statements in second-order arithmetic, typically expressing two well-known theorems, and $\wks \vdash \varphi \imp \psi$, then we say that $\varphi$ implies $\psi$ over $\wks$ and think of the logical strength of $\varphi$ as being at least that of $\psi$.  We also like to appeal to the equivalence of $\wks \vdash \varphi \imp \psi$ and $\wks + \varphi \vdash \psi$ in order to think of the strength of $\varphi$ in terms of the additional statements $\psi$ that become provable once $\varphi$ is considered as a new axiom and added to the axioms of $\wks$.

Often, as in this work, we wish to compare a theorem $\varphi$ to an axiom system $\sts$ that is stronger than $\wks$ and proves $\varphi$.  In this situation, if $\wks + \varphi \vdash \psi$ for every axiom $\psi$ of $\sts$, then we say that $\varphi$ is equivalent to $\sts$ over $\wks$.  The proof of (the axioms of) $\sts$ from $\wks + \varphi$ is called a \emph{reversal}, from which `reverse mathematics' gets its name.  It is a remarkable phenomenon that equivalences of this sort are the usual case:  a theorem is typically either provable in the standard $\wks$ or equivalent to one of four well-known stronger systems.  These five systems together are known as the Big Five.  There are, however, many fascinating examples of misfit theorems as well, and we refer the reader to~\cite{Hirschfeldt} for the tip of that particular iceberg.

It is possible to formalize the Tietze extension theorem in second-order arithmetic in several different ways, and different formalizations may exhibit different logical strengths.  The logical systems in play are the first three of the Big Five, which are
\begin{itemize}
\item the base system $\rca$ (for \emph{recursive comprehension axiom}), which corresponds to computable mathematics and is the standard $\wks$;

\item the stronger system $\wkl$ (for \emph{weak K\"onig's lemma}), which adds the ability to make compactness arguments; and

\item the yet stronger system $\aca$ (for \emph{arithmetical comprehension axiom}), which adds the ability to form sets defined by any number of first-order quantifiers (but no second-order quantifiers).
\end{itemize}

The differences among the formalizations of the Tietze extension theorem that we consider arise from two sources.  The first source is the problem of coding closed subsets of complete separable metric spaces, which are most naturally thought of as third-order objects, as second-order objects.  Closed sets can be coded by negative information (in which case they are simply called \emph{closed}), positive information (in which case they are called \emph{separably closed}), or both simultaneously (in which case they are called \emph{closed and separably closed}).  In a compact complete separable metric space, a set is closed if and only if it is separably closed, but both directions of this equivalence are themselves equivalent to $\aca$ over $\rca$~\cite[Theorem~3.3]{Brown}.  These notions of closedness are thus distinct when working in $\rca$.

The second source of differences is the fact that the statement ``every continuous function $f \colon X \imp \Rb$ on a compact complete separable metric space $X$ is uniformly continuous'' is equivalent to $\wkl$ over $\rca$ (see \cite[Theorem~IV.2.2 and Theorem~IV.2.3]{SimpsonSOSOA}) and therefore has non-trivial logical strength.  Here `uniformly continuous' means having a \emph{modulus of uniform continuity}, which is a function that, when given an $\epsilon > 0$, returns a $\delta > 0$ such that $(\forall x, y \in X)(d(x,y) < \delta \imp d(f(x),f(y)) < \epsilon)$.  Thus though the two statements
\smallskip
\begin{itemize}
\item[(1)] For every compact complete separable metric space $X$, every closed $C \subseteq X$, and every continuous $f \colon C \imp \Rb$, there is a continuous $F \colon X \imp \Rb$ extending $f$.

\smallskip

\item[(2)] For every compact complete separable metric space $X$, every closed $C \subseteq X$, and every uniformly continuous $f \colon C \imp \Rb$, there is a uniformly continuous $F \colon X \imp \Rb$ extending $f$.
\end{itemize}
\smallskip
are obviously equivalent in ordinary mathematics, the situation over $\rca$ is more complicated.  Following Giusto and Simpson's terminology from~\cite{GiustoSimpson}, we call statement (1) the \emph{Tietze extension theorem} and statement (2) the \emph{strong Tietze extension theorem}.  The following list summarizes some of the known results.
\begin{itemize}
\item The Tietze extension theorem for closed sets (i.e., the negative information coding) is provable in $\rca$ (see \cite[Theorem~II.7.5]{SimpsonSOSOA}).  In this case the assumption that $X$ is compact may be dropped if $f$ is assumed to be bounded.

\item The Tietze extension theorem for separably closed sets is equivalent to $\aca$ over $\rca$~\cite[Theorem~6.9]{GiustoSimpson}.

\item The strong Tietze extension theorem for separably closed sets is equivalent to $\wkl$ over $\rca$~\cite[Theorem~6.14]{GiustoSimpson}.

\item The strong Tietze extension theorem for closed sets is provable in $\wkl$ because the Tietze extension theorem for closed sets is provable in $\rca$, and $\wkl$ proves that continuous functions on compact complete separable metric spaces are uniformly continuous.

\item The strong Tietze extension theorem for closed and separably closed sets is not provable in $\rca$.  In fact, it implies the existence of diagonally non-recursive functions~\cite[Lemma~6.17]{GiustoSimpson}.
\end{itemize}

Notice that the above list of results leaves open the precise logical strength of the strong Tietze extension theorem for closed sets and for closed and separably closed sets.  Giusto and Simpson conjecture that these theorems are equivalent to $\wkl$.  Specifically, they make the following conjecture. 

\begin{Conjecture}[{\cite[Conjecture~6.15]{GiustoSimpson}}]\label{conj-sTET}
The following are equivalent over $\rca$.
\begin{itemize}
\item[(1)] $\wkl$.

\item[(2)] Let $X$ be a compact complete separable metric space, let $C$ be a closed subset of $X$, and let $f \colon C \imp \Rb$ be a continuous function with a modulus of uniform continuity.  Then there is a continuous function $F \colon X \imp \Rb$ with a modulus of uniform continuity that extends $f$.

\item[(3)] Same as (2) with `closed' replaced by `closed and separably closed.'

\item[(4)] Special case of (2) with $X = [0,1]$.

\item[(5)] Special case of (3) with $X = [0,1]$.
\end{itemize}
\end{Conjecture}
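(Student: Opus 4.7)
The plan is to establish the chain $(1) \Rightarrow (2) \Rightarrow (3) \Rightarrow (5) \Rightarrow (1)$, with the side route $(2) \Rightarrow (4) \Rightarrow (5)$. The implications $(2) \Rightarrow (3)$, $(2) \Rightarrow (4)$, $(3) \Rightarrow (5)$, and $(4) \Rightarrow (5)$ are essentially free: each either weakens the hypothesis by discarding the positive-information coding of the closed set or specializes $X$ to $[0, 1]$. For $(1) \Rightarrow (2)$, apply the Tietze extension theorem for closed sets in compact complete separable metric spaces, which is provable in $\rca$ by \cite[Theorem~II.7.5]{SimpsonSOSOA}, to obtain a continuous extension $F$; the extension automatically acquires a modulus of uniform continuity, since under $\wkl$ every continuous function on a compact complete separable metric space is uniformly continuous \cite[Theorem~IV.2.3]{SimpsonSOSOA}.

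The substantive step is the reversal $(5) \Rightarrow (1)$, which I plan to carry out via the $\Sigma^0_1$-separation scheme, a known reformulation of $\wkl$ over $\rca$ (see \cite{SimpsonSOSOA}). Given disjoint $\Sigma^0_1$-subsets $A$ and $B$ of $\Nb$ with stage-by-stage enumerations $A_s, B_s$, I will construct a computable closed-and-separably-closed set $C \subseteq [0,1]$ and a uniformly continuous $f \colon C \imp \Rb$ with a computable modulus of uniform continuity such that any uniformly continuous extension $F$ of $f$ to $[0,1]$ yields the separating set via its values at a computable sequence of test points. Concretely, fix a computable sequence of pairwise disjoint closed intervals $[a_n, b_n] \subseteq (0, 1]$ whose midpoints $m_n$ converge to $0$ and whose lengths are at most $2^{-2n}$, and let $C = \{0\} \cup \{a_n, b_n, m_n : n \in \Nb\}$; both the dense-subsequence and the negative-information codings of $C$ are computable. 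On $C$, declare $f(0) = f(a_n) = f(b_n) = 0$ and arrange $f(m_n) = +2^{-n}$, $-2^{-n}$, or $0$ according as $n \in A$, $n \in B$, or $n \notin A \cup B$. The unconditional bound $|f(m_n)| \leq 2^{-n}$, combined with the rapid decay of $b_n - a_n$, yields a modulus $\delta(\epsilon)$ computable in $\epsilon$ alone. Granted (5), the resulting extension $F \colon [0, 1] \imp \Rb$ satisfies $F(m_n) = f(m_n) \in \{-2^{-n}, 0, 2^{-n}\}$; since this value is never $2^{-n-1}$, the predicate ``$F(m_n) > 2^{-n-1}$'' is decidable from the code of $F$, and the set $X = \{n : F(m_n) > 2^{-n-1}\}$ satisfies $A \subseteq X \subseteq \Nb \setminus B$.

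The principal obstacle is producing an honest $\rca$-code for $f$. The natural values $f(m_n)$ are $\Sigma^0_2$ in the given $\Sigma^0_1$-data, whereas a continuous function with modulus of uniform continuity in $\rca$ is a set of ``open ball in, open ball out'' tuples that must supply arbitrarily tight approximations at each point of the domain; in particular, for $n \notin A \cup B$ we cannot at any finite stage commit $f(m_n)$ to a small neighborhood of $0$ without invoking the $\Pi^0_1$ information $n \notin A \cup B$. I plan to sidestep this by replacing each isolated point $m_n$ with a computable convergent sequence of auxiliary points $y_{n, s} \in (a_n, b_n)$ with $y_{n, s} \to m_n$, and defining $f(y_{n, s})$ so that its value is fully determined at stage $s$ by $A_s$ and $B_s$, so that the code of $f$ is primitive recursive in the standard enumerations, while the limit values $f(m_n)$ still encode the required separating information. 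Executing this construction carefully, verifying the resulting modulus of uniform continuity, and reading off $X$ from the code of $F$ constitute the bulk of the technical work; the remaining reverse-mathematical bookkeeping is routine.
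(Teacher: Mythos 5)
Your outer scaffolding is fine: the easy implications among (2)--(5), and the derivation of (2) from (1) by combining the $\rca$-provable Tietze extension theorem for closed sets with the $\wkl$-provable uniform continuity of continuous functions on compact spaces, all match the paper. The $\Sigma^0_1$-separation reformulation of $\wkl$ is also the normal form the paper uses (via separating ranges of injections). The gap is in the substance of the reversal $(5) \Rightarrow (1)$.

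You correctly diagnose the codability obstacle --- that $f(m_n)$ is $\Sigma^0_2$ in the enumerations and cannot be approximated to arbitrary precision at any finite stage when $n \notin A \cup B$ --- but the proposed fix does not close the gap; it trades it for a failure of the modulus of uniform continuity. In your auxiliary-point scheme, if $n$ enters $A$ at a late stage $s_0$, you have already committed $f(y_{n,s}) = 0$ for all $s < s_0$, while $f(y_{n,s_0}) = 2^{-n}$. The spatial gap $|y_{n,s_0 - 1} - y_{n,s_0}|$ shrinks to $0$ as $s_0 \to \infty$, yet the jump in $f$ remains the fixed quantity $2^{-n}$. Hence no function $h(n)$ can serve as a modulus: for every candidate $\delta = 2^{-h(n)}$ there is an enumeration scenario in which two points of $C$ within $\delta$ of each other have $f$-values differing by $2^{-n}$. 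The same objection survives any cosmetic variant (two-sided approach, gradual ramp over stages) because the ramp still occurs over a spatial window controlled by $2^{-s_0}$, not by $n$. This is not a bookkeeping issue to be verified later; it is a structural defect of fixing the domain $C$ in advance and coding information purely through function values.

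The paper sidesteps this by first observing that one may freely add $\neg\wkl$ to the hypotheses (if $\wkl$ already holds there is nothing to prove), and $\neg\wkl$ supplies a cover of $[0,1]$ by rational open intervals with no finite subcover (Theorem~\ref{thm-WKLCompact}). Transferring this cover into each coding interval $I_{e,m}$ yields a cover of $I_{e,m}$ with no finite subcover, which lets the construction \emph{enumerate intervals into the complement of $C$} while waiting for $e$ to appear in $\ran g_0 \cup \ran g_1$. If $e$ never appears, the entire $I_{e,m}$ is erased from $C$ and $f$ need not be defined there; if $e$ appears at stage $s$, the leftover $I_{e,m} \cap C$ is a nondegenerate finite union of closed rational intervals (nondegenerate precisely because the cover had no finite subcover), on which $f$ is set to the constant $\pm 2^{-2e}$. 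Since $f$ is piecewise constant on well-separated blocks, the modulus $h(n) = 2n+2$ works with a one-line verification, and the $\Delta^0_1$-codability problem evaporates because nothing is promised about $f$ on portions of $C$ that get erased. In short: you need to let the enumeration shrink $C$ rather than move values inside a fixed $C$, and for this you need the non--Heine-Borel cover that $\neg\wkl$ provides.
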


The question of whether or not this conjecture holds also appears as Question~16 in Montalb\'an's \emph{Open questions in reverse mathematics}~\cite{Montalban}.  Let $\stet$ denote statement~(5) in Conjecture~\ref{conj-sTET} (the notation is chosen to evoke \emph{the strong Tietze extension theorem for $[0,1]$}).  We prove that Conjecture~\ref{conj-sTET} is true by proving that $\rca + \stet \vdash \wkl$.

Before continuing, we remark that Giusto and Simpson's \emph{Located sets and reverse mathematics}~\cite{GiustoSimpson}, which contains Conjecture~\ref{conj-sTET}, is largely concerned with the notion of a located set, where a closed or separably closed subset $C$ of a complete separable metric space $X$ is called \emph{located} if there is a continuous distance function $f \colon X \imp \Rb$, where $f(x) = \inf\{d(x,y) : y \in C\}$ for every $x \in X$.  With the assumption of locatedness, the equivalence between closed and separably closed becomes provable in $\rca$:  for compact complete separable metric spaces, $\rca$ proves that every closed and located set is separably closed and that every separably closed and located set is closed.  Furthermore, the strong Tietze extension theorem for closed and located sets (and thus for separably closed and located sets) is provable in $\rca$.  These results and many others appear in~\cite{GiustoSimpson}.  However, located sets are not relevant to Conjecture~\ref{conj-sTET}, so we make no use of them here.

For ease of comparison, the table below displays the strengths of eight versions of the Tietze extension theorem, taking into account the confirmation of Conjecture~\ref{conj-sTET} proven here.  The row labeled `Tietze extension theorem' corresponds to versions of the theorem where $f$ and its extension are not required to be uniformly continuous, and the row labeled `strong Tietze extension theorem' corresponds to versions of the theorem where $f$ and its extension are required to be uniformly continuous.  The columns represent the different assumptions on the domain $C$ of $f$.  The column labeled `located' means that $C$ is assumed to be closed and located, which in $\rca$ is equivalent to assuming that $C$ is separably closed and located.

\begin{center}
\begin{tabular}{|l|c|c|c|c|}
\hline
	& \makecell{located}
	& \makecell{closed \& separably closed}
	& \makecell{closed}
	& \makecell{separably closed}\\
\hline
\makecell{Tietze extension theorem}
	& \makecell{$\rca$}
	& \makecell{$\rca$}
	& \makecell{$\rca$}
	& \makecell{$\aca$}\\
\hline
\makecell{strong Tietze extension theorem}
	& \makecell{$\rca$}
	& \makecell{$\wkl$}
	& \makecell{$\wkl$}
	& \makecell{$\wkl$}\\
\hline
\end{tabular}
\end{center}

\section{Background}

We introduce the systems $\rca$, $\wkl$, and $\aca$ and then define in $\rca$ the analytic and topological notions relevant to Conjecture~\ref{conj-sTET}.  The standard reference for reverse mathematics is Simpson's \emph{Subsystems of Second Order Arithmetic}~\cite{SimpsonSOSOA}, and almost all of this section's material can be found in expert detail therein.  Simpson's book also contains many, many examples of theorems that are provable in $\rca$, theorems that are equivalent to $\wkl$ over $\rca$, and theorems that are equivalent to $\aca$ over $\rca$.

\subsection{$\rca$, $\wkl$, and $\aca$}

The axioms of $\rca$ are:  a first-order sentence expressing that $\Nb$ is a discretely ordered commutative semi-ring with identity; the \emph{$\Sigma^0_1$ induction scheme}, which consists of the universal closures (by both first- and second-order quantifiers) of all formulas of the form
\begin{align*}
[\varphi(0) \andd \forall n(\varphi(n) \imp \varphi(n+1))] \imp \forall n \varphi(n),
\end{align*}
where $\varphi$ is $\Sigma^0_1$; and the \emph{$\Delta^0_1$ comprehension scheme}, which consists of the universal closures (by both first- and second-order quantifiers) of all formulas of the form
\begin{align*}
\forall n (\varphi(n) \biimp \psi(n)) \imp \exists X \forall n(n \in X \biimp \varphi(n)),
\end{align*}
where $\varphi$ is $\Sigma^0_1$, $\psi$ is $\Pi^0_1$, and $X$ is not free in $\varphi$.

$\rca$ is the standard base system and captures what might be called \emph{effective mathematics}.  The name `$\rca$,' which stands for \emph{recursive comprehension axiom}, refers to the $\Delta^0_1$ comprehension scheme because a set $X$ is $\Delta^0_1$ in a set $Y$ if and only if $X$ is recursive in $Y$.  The subscript `$0$' refers to the fact that induction in $\rca$ is limited to $\Sigma^0_1$ formulas.

$\rca$ proves enough number-theoretic facts to implement the codings of finite sets and sequences that are ubiquitous in recursion theory.  Therefore, in $\rca$ we can represent the set $\Nb^{<\Nb}$ of all finite sequences as well as its subset $2^{<\Nb}$ of all finite binary sequences, and we can give the usual definition of a tree as subset of $\Nb^{<\Nb}$ that is closed under initial segments.  Thus, in $\rca$ we can formulate (but not prove) \emph{weak K\"onig's lemma}, which is the statement ``every infinite subtree of $2^{<\Nb}$ has an infinite path.''  $\wkl$ is then the system $\rca + \text{weak K\"onig's lemma}$.  The fact that there is a recursive infinite subtree of $2^{<\Nb}$ with no recursive infinite path can be used to show that $\wkl$ is strictly stronger than $\rca$.  $\wkl$ captures the mathematics of compactness.  For example, $\wkl$ is equivalent to the Heine-Borel compactness of $[0,1]$ (see~\cite[Theorem~IV.1.2]{SimpsonSOSOA}), a fact that is crucial for our analysis of $\stet$.

An important strategy for proving that a theorem implies $\wkl$ over $\rca$ is to employ the following lemma, which states that $\wkl$ is equivalent over $\rca$ to the statement that for every pair of injections with disjoint ranges, there is a set that separates the two ranges.

\begin{Lemma}[see {\cite[Lemma~IV.4.4]{SimpsonSOSOA}}]\label{lem-sep}
The following are equivalent over $\rca$.
\begin{itemize}
\item[(i)] $\wkl$.
\item[(ii)] If $g_0, g_1 \colon \Nb \imp \Nb$ are injections such that $\forall m \forall n(g_0(m) \neq g_1(n))$, then there is a set $X$ such that $\forall m(g_0(m) \in X \andd g_1(m) \notin X)$.
\end{itemize}
\end{Lemma}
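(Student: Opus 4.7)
The plan is to prove both directions of the equivalence, with the reverse implication (ii)$\Imp$(i) being substantially more delicate.

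For (i)$\Imp$(ii), given injections $g_0, g_1 \colon \Nb \imp \Nb$ with disjoint ranges, I would use $\Delta^0_1$ comprehension to form the binary tree
\[
T = \{\sigma \in 2^{<\Nb} : \forall m < |\sigma|\,[(g_0(m) < |\sigma| \imp \sigma(g_0(m)) = 1) \andd (g_1(m) < |\sigma| \imp \sigma(g_1(m)) = 0)]\}.
\]
Disjointness of the ranges guarantees that the $g_0$- and $g_1$-imposed constraints are never inconsistent, so for each $n$ some string of length $n$ lies in $T$ (take $1$ at each position in $\ran(g_0) \cap [0,n)$ and $0$ elsewhere). Hence $T$ is infinite, and an infinite path supplied by $\wkl$ is the characteristic function of the required separating set.

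For (ii)$\Imp$(i), the task is to reduce finding a path through a given infinite binary tree $T$ to a separation problem. The key observations are that, within $\rca$, the ranges of injections $\Nb \imp \Nb$ are $\Sigma^0_1$-definable, and that ``$\sigma$ is \emph{dead}'' — meaning some finite level admits no extension of $\sigma$ in $T$ — is itself a $\Sigma^0_1$ predicate. I would fix an injective coding $c \colon 2^{<\Nb} \imp \Nb$ and build $g_0, g_1$ stagewise from $T$, using reserved blocks of $\Nb$ disjoint from $\ran(c)$ and from one another to pad each $g_i$ to a total injection; the distinguishing enumeration rule is that $c(\sigma)$ is placed in $\ran(g_0)$ exactly when a stage witnesses ``$\sigma$ followed by $0$ is dead'' at least as early as any stage witnessing ``$\sigma$ followed by $1$ is dead,'' and dually for $g_1$, which enforces $\ran(g_0) \cap \ran(g_1) = \emptyset$. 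Given a separator $X$, define the path $p$ by $p(n) = 1$ iff $c(p \restriction n) \in X$.

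The principal obstacle is verifying in $\rca$ that the $p$ so defined really lies in $[T]$. I would argue by contradiction using the $\Sigma^0_1$ least-element principle: suppose some $n$ has $p \restriction n$ dead and take the least such $n_0$. Since $T$ is infinite, $p \restriction 0$ is not dead, so $n_0 \geq 1$; and since $p \restriction (n_0 - 1)$ is not dead while its child $p \restriction n_0$ is, the sibling of $p \restriction n_0$ must also fail to be dead. By construction, $c(p \restriction (n_0 - 1))$ is therefore enumerated into whichever of $\ran(g_0), \ran(g_1)$ corresponds to the dead child, and the separation property of $X$ then forces $p(n_0 - 1)$ to take the value pointing to the non-dead sibling, contradicting the choice of $n_0$. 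The technical care lies in arranging the stagewise construction of $g_0, g_1$ to be $\Delta^0_1$-definable from $T$, to produce total injections, and to maintain disjoint ranges — all simultaneously.
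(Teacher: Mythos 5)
Your proof is correct and follows the standard argument (Simpson's, which the paper simply cites): the tree of finite separations consistent with $g_0, g_1$ gives (i)\,$\Imp$\,(ii), and the ``first dead child'' reduction together with a $\Sigma^0_1$ least-counterexample argument gives (ii)\,$\Imp$\,(i). One small point of precision in the forward direction: to exhibit a node of $T$ at level $n$ inside $\rca$, define $\sigma(j) = 1$ iff $\exists m < n\,(g_0(m) = j)$ rather than $j \in \ran(g_0) \cap [0,n)$ — the bounded condition is $\Delta^0_1$, whereas unrestricted membership in $\ran(g_0)$ is only $\Sigma^0_1$-decidable, and the bounded witness still lies in $T$ by disjointness of the ranges, so the argument goes through unchanged.
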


For comparison, $\aca$, introduced next, is equivalent over $\rca$ to the statement that for every injection there is a set consisting of exactly the elements in the injection's range.

The axioms of $\aca$ are those of $\rca$, plus the \emph{arithmetical comprehension scheme}, which consists of the universal closures (by both first- and second-order quantifiers) of all formulas of the form
\begin{align*}
\exists X \forall n(n \in X \biimp \varphi(n)),
\end{align*}
where $\varphi$ is an arithmetical formula in which $X$ is not free.

Jockusch and Soare's famous low basis theorem~\cite{JockuschSoare} can be used to prove that $\aca$ is strictly stronger than $\wkl$.  The strength of $\aca$ is great enough to provide a natural and extensive development of most classical mathematics.  Though we do not make further use of $\aca$ here, it is relevant to the discussion in the introduction.

\subsection{Analytic and topological notions in $\rca$}

Following~\cite[Section~II.4]{SimpsonSOSOA}, we code integers as pairs of natural numbers and rational numbers as pairs of integers.  A real number is then coded by a sequence of rational numbers $\la q_k : k \in \Nb \ra$ such that $\forall k \forall i(|q_k - q_{k+i}| \leq 2^{-k})$.  The expression `$x \in \Rb$' abbreviates the predicate ``$x$ codes a real number.''  Two real numbers coded by $\la q_k : k \in \Nb \ra$ and $\la q_k' : k \in \Nb \ra$ are equal if $\forall k(|q_k - q_k'| \leq 2^{-k+1})$.

The definition of a complete separable metric space generalizes the coding of reals by rapidly converging Cauchy sequences.

\begin{Definition}[$\rca$; {see \cite[Definition~II.5.1]{SimpsonSOSOA}}]
A \emph{complete separable metric space} $\widehat A$ is coded by a non-empty set $A \subseteq \Nb$ and a distance function $d \colon A \times A \imp \Rb$ such that, for all $a,b,c \in A$, $d(a,a) = 0$, $d(a,b) = d(b,a) \geq 0$, and $d(a,b) + d(b,c) \geq d(a,c)$.

A \emph{point} in $\widehat A$ is coded by a sequence $\la a_k : k \in \Nb \ra$ of elements of $A$ such that $\forall k \forall i(d(a_k, a_{k+i}) \leq 2^{-k})$.  The expression `$x \in \widehat A$' abbreviates the predicate ``$x$ codes a point in $\widehat A$.''

If $x = \la a_k : k \in \Nb \ra$ and $y = \la b_k : k \in \Nb \ra$ code points in $\widehat A$, then $d(x,y)$ is defined to be $\lim_k d(x_k, y_k)$, and (the points coded by) $x$ and $y$ are defined to be equal if $d(x,y) = 0$.
\end{Definition}

\begin{Definition}[$\rca$; {see~\cite[Definition~III.2.3]{SimpsonSOSOA}}]\label{def-compact}
A complete separable metric space $\widehat A$ is \emph{compact} if there is a sequence of finite sequences $\la \la x_{i,j} : j < n_i \ra : i \in \Nb \ra$ of points in $\widehat A$ such that
\begin{align*}
(\forall z \in \widehat A)(\forall i \in \Nb)(\exists j < n_i)(d(z, x_{i,j}) < 2^{-i}).
\end{align*}
\end{Definition}

For example, the unit interval $[0,1]$ is the complete separable metric space coded by $\{q \in \Qb : 0 \leq q \leq 1\}$ with the usual metric, and the sequence $\la \la j2^{-i} : j \leq 2^i \ra : i \in \Nb \ra$ witnesses that $[0,1]$ is compact.

In complete separable metric spaces, open sets are coded by enumerations of open balls, and closed sets are complements of open sets.  For the purposes of the remaining definitions, $\Qb^+ = \{q \in \Qb : q > 0\}$ denotes the set of positive rationals.

\begin{Definition}[$\rca$; {see~\cite[Definition~II.5.6 and Definition~II.5.12]{SimpsonSOSOA}}]
An \emph{open set} in a complete separable metric space $\widehat A$ is coded by a set $U \subseteq \Nb \times A \times \Qb^+$.  A point $x \in \widehat A$ belongs to $U$ (abbreviated `$x \in U$') if
\begin{align*}
\exists n \exists a \exists r(d(x,a) < r \andd \la n, a, r \ra \in U).
\end{align*}

A \emph{closed set} $C$ in a complete separable metric space is also coded by a set $U \subseteq \Nb \times A \times \Qb^+$, but now a point $x \in \widehat A$ belongs to $C$ (abbreviated `$x \in C$') if $x \notin U$.
\end{Definition}

The idea here is that the pair $\la a, q \ra \in A \times \Qb^+$ codes the open ball $B(a,q)$ of radius $q$ centered at $a$ and that a set $U \subseteq \Nb \times A \times \Qb^+$ codes some sequence $\la B(a_k, q_k) : k \in \Nb \ra$ of open balls and hence codes the open set $\bigcup_{k \in \Nb}B(a_k, q_k)$.  Thus in this scheme, open sets are coded by positive information (enumerations of open balls contained in the open set), and closed sets are coded by negative information (enumerations of open balls disjoint from the closed set).  Alternatively, a closed set $C$ can be coded by positive information by enumerating a sequence of points whose closure is $C$.  Such a set is called \emph{separably closed}.

\begin{Definition}[$\rca$; {see~\cite[Definition~4.1]{GiustoSimpson}}]
A \emph{separably closed set} in a complete separable metric space $\widehat A$ is coded by a sequence $C = \la x_k : k \in \Nb \ra$ of points in $\widehat A$.  A point $x \in \widehat A$ belongs to $C$ (abbreviated `$x \in C$') if
\begin{align*}
(\forall q \in \Qb^+)(\exists n \in \Nb)(d(x, x_n) < q).
\end{align*}
\end{Definition}

We can now define a complete separable metric space to be \emph{Heine-Borel compact} if for every sequence $\la U_k : k \in \Nb \ra$ of open sets such that $(\forall x \in \widehat A)(\exists k \in \Nb)(x \in U_k)$, there is an $N \in \Nb$ such that $(\forall x \in \widehat A)(\exists k < N)(x \in U_k)$.  Although $\rca$ proves that $[0,1]$ is a compact complete separable metric space in the sense of Definition~\ref{def-compact}, the Heine-Borel compactness of $[0,1]$ is equivalent to $\wkl$ over $\rca$.

\begin{Theorem}[{see~\cite[Theorem~IV.1.2 and Theorem~IV.1.5]{SimpsonSOSOA}}]\label{thm-WKLCompact}
The following are equivalent over $\rca$.
\begin{itemize}
\item[(1)] $\wkl$.
\item[(2)] Every compact complete separable metric space is Heine-Borel compact.
\item[(3)] The unit interval $[0,1]$ is Heine-Borel compact.
\item[(4)] For every sequence $\la (a_k, b_k) : k \in \Nb \ra$ of intervals with rational endpoints such that $(\forall x \in [0,1])(\exists k \in \Nb)(x \in (a_k, b_k))$, there is an $N \in \Nb$ such that $(\forall x \in [0,1])(\exists k < N)(x \in (a_k, b_k))$.
\end{itemize}
\end{Theorem}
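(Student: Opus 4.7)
The plan is to close the cycle $(1) \Imp (2) \Imp (3) \Imp (4) \Imp (1)$. The middle two steps are essentially free: $[0,1]$ is a compact complete separable metric space in the sense of Definition~\ref{def-compact}, witnessed by the dyadic rationals $\la \la j2^{-i} : j \leq 2^i \ra : i \in \Nb \ra$, so (2) specializes directly to (3); and each interval $(a_k,b_k)$ with rational endpoints is already coded as an open set in $[0,1]$, so (3) specializes to (4).

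For $(1) \Imp (2)$, I would let $\widehat A$ be compact with witnessing dense finite sequences $\la \la x_{i,j} : j < n_i \ra : i \in \Nb \ra$ and let $\la U_k : k \in \Nb \ra$ be an open cover of $\widehat A$. The strategy is to search, in parallel over all finite $N$, for a point of $\widehat A$ missed by $U_0, \ldots, U_{N-1}$. Concretely I would build a tree $T$ whose nodes at level $s$ encode Cauchy-consistent choices $\la j_0, \ldots, j_{s-1} \ra$ with $j_i < n_i$ (so that $d(x_{i,j_i}, x_{i+1, j_{i+1}}) \leq 2^{-i} + 2^{-i-1}$) such that none of $U_0, \ldots, U_{s-1}$ has yet been certified to contain the would-be limit point. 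If no finite $N$ gives a subcover then $T$ is infinite at every level, and after coding $T$ as a subtree of $2^{<\Nb}$, $\wkl$ supplies an infinite path, which codes a point of $\widehat A$ missed by every $U_k$, a contradiction.

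For $(4) \Imp (1)$ I would proceed contrapositively. Assume $T \subseteq 2^{<\Nb}$ is infinite with no infinite path. For each $\sigma \in 2^{<\Nb}$ let $I_\sigma$ be the standard dyadic subinterval of $[0,1]$ of length $2^{-|\sigma|}$ determined by $\sigma$, and let $\tilde I_\sigma$ be a slight open enlargement with rational endpoints. The sequence $\la \tilde I_\sigma : \sigma \notin T \ra$ is enumerable in $\rca$ and covers $[0,1]$: any $x \in [0,1]$ has a binary expansion whose initial segments must eventually leave $T$ (since $T$ has no path), and the enlargement handles the dyadic boundary points where $x$ could have two expansions. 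Applying (4) yields an $N$ such that the intervals $\tilde I_\sigma$ for $\sigma \notin T$ with $|\sigma| \leq N$ already cover $[0,1]$; a short combinatorial argument using the tree property of $T$ and the geometry of the $\tilde I_\sigma$ then forces $T \cap 2^N = \emptyset$, contradicting infinitude of $T$.

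The main obstacle, in both nontrivial directions, is fitting the combinatorial data squarely into the form that $\wkl$ directly consumes or produces. In $(1) \Imp (2)$, Definition~\ref{def-compact} supplies finite lists rather than a binary tree, so some bookkeeping is needed to massage the witnessing data into a subtree of $2^{<\Nb}$ on which $\wkl$ can be invoked on the nose. In $(4) \Imp (1)$, the technical subtlety is the dyadic-rational boundary problem: the enlargement $\tilde I_\sigma$ must be chosen large enough to capture boundary points lying between sibling dyadic intervals, yet controlled enough that the final pigeonhole argument forcing $T \cap 2^N = \emptyset$ still goes through.
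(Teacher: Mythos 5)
The paper itself does not prove Theorem~\ref{thm-WKLCompact}; it is quoted from Simpson, so your argument has to stand on its own against the standard proofs. Your cycle $(1)\Imp(2)\Imp(3)\Imp(4)\Imp(1)$ is the right architecture, the two specializations are indeed free, and your $(1)\Imp(2)$ is the standard argument: the points you defer (timestamping the ``certified covered'' condition against a stagewise enumeration of the $U_k$ so that the tree is $\Delta^0_1$, and passing from a bounded tree to a subtree of $2^{<\Nb}$) are genuinely routine.

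The gap is in $(4)\Imp(1)$, at exactly the step you wave at (``a short combinatorial argument \ldots{} forces $T\cap 2^N=\emptyset$''). With individually enlarged dyadic intervals this argument does not go through, and the tension you flag cannot be resolved by tuning the enlargement. The amount $\epsilon_\sigma>0$ by which $\tilde I_\sigma$ exceeds $I_\sigma$ is fixed when $\sigma$ is enumerated, before the finite subcover --- and hence the level $N$ at which you want to exhibit an uncovered witness --- is known. Once $2^{-N-1}<\epsilon_\sigma$ for some $\sigma$ of the subcover whose interval abuts the interval of the initial segment of $\tau$ of length $|\sigma|$, the enlargement reaches past the shared endpoint far enough to cover the midpoint of $I_\tau$ (possibly all of $I_\tau$), so the natural witness is covered and no contradiction with $T\cap 2^N\neq\emptyset$ results; a total-length count fails for the same reason, since the excess mass $\sum_\sigma 2\epsilon_\sigma$ can dwarf the mass $2^{-N}$ of the surviving cylinder. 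The standard repairs change the construction, not the constants. One option: use no enlargement at all, covering by the open interiors of the $I_\sigma$ with $\sigma\notin T$ together with, for each pair $\sigma,\sigma'\notin T$ whose intervals share an endpoint (arising from the two binary expansions of a dyadic rational, both of which must leave $T$), the single open interval which is the interior of $I_\sigma\cup I_{\sigma'}$; then every member of a finite subcover lies in a union of at most two $I_\rho$ with $\rho\notin T$, and the midpoint of $I_\tau$ for $\tau\in T$ of length exceeding all the $|\rho|$ has exact dyadic level $|\tau|+1$, so it can meet such an $I_\rho$ only in an impossible shared endpoint and is therefore uncovered. Another: replace dyadic intervals by the middle-thirds Cantor intervals, so that distinct same-length intervals are separated by gaps of length $3^{-|\sigma|}$ and slight enlargement becomes harmless; a third is to reverse through $\Sigma^0_1$ separation (Lemma~\ref{lem-sep}) rather than through an arbitrary pathless tree, which is how both Simpson and this paper's own reversal of $\stet$ proceed.
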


Finally, we define continuous functions and moduli of uniform continuity.

\begin{Definition}[$\rca$; {see~\cite[Definition~II.6.1]{SimpsonSOSOA}}]\label{def-continuous}
Let $\widehat A$ and $\widehat B$ be complete separable metric spaces.  A continuous partial function from $\widehat A$ to $\widehat B$ is coded by a set $\Phi \subseteq \Nb \times A \times \Qb^+ \times B \times \Qb^+$ that satisfies the properties below.  Let $\la a, r \ra \Phi \la b, s \ra$ denote $\exists n(\la n, a, r, b, s \ra \in \Phi)$.  For $a, a' \in A$ and $r, r' \in \Qb^+$, let $\la a', r' \ra \prec \la a, r \ra$ denote $d(a,a') + r' < r$ and similarly for $b, b' \in B$ and $s, s' \in \Qb^+$.  The properties that $\Phi$ must satisfy are that, for all $a, a' \in A$, all $b, b' \in B$, and all $r, r', s, s' \in \Qb^+$, 
\begin{itemize}
\item if $\la a, r \ra \Phi \la b, s \ra$ and $\la a, r \ra \Phi \la b', s' \ra$, then $d(b, b') \leq s + s'$;

\item if $\la a, r \ra \Phi \la b, s \ra$ and $\la a', r' \ra \prec \la a, r \ra$, then $\la a', r' \ra \Phi \la b, s \ra$; and

\item if $\la a, r \ra \Phi \la b, s \ra$ and $\la b, s \ra \prec \la b', s' \ra$, then $\la a, r \ra \Phi \la b', s' \ra$.
\end{itemize}
\end{Definition}

The domain of the function $f$ coded by $\Phi$ is the set of all $x \in \widehat A$ such that
\begin{align*}
(\forall q \in \Qb^+)(\exists \la a, r \ra \in A \times \Qb^+)(\exists \la b, s \ra \in B \times \Qb^+)(\la a, r \ra \Phi \la b, s \ra \andd d(x,a) < r \andd s < q).
\end{align*}
If $x \in \dom f$, then $f(x)$ is the unique $y \in \widehat B$ such that
\begin{align*}
(\forall \la a, r \ra \in A \times \Qb^+)(\forall \la b, s \ra \in B \times \Qb^+)((d(x,a) < r \andd \la a, r \ra \Phi \la b, s \ra) \imp d(y,b) \leq s).
\end{align*}

The idea behind Definition~\ref{def-continuous} is that $\Phi$ enumerates pairs of open balls $\la B(a,r), B(b,s) \ra$ (i.e., the pairs of balls coded by the $\la a, r \ra$ and $\la b, s \ra$ such that $\la a, r \ra \Phi \la b, s \ra$) with the property that if $f$ is the function being coded by $\Phi$ and $x$ is in both $B(a,r)$ and $\dom f$, then $f(x)$ is in the closure of $B(b,s)$.

\begin{Definition}[$\rca$; {see~\cite[Definition~IV.2.1]{SimpsonSOSOA}}]
Let $\widehat A$ and $\widehat B$ be complete separable metric spaces, and let $f$ be a partial continuous function from $\widehat A$ to $\widehat B$.  A \emph{modulus of uniform continuity} for $f$ is a function $h \colon \Nb \imp \Nb$ such that
\begin{align*}
(\forall x, y \in \dom f)(\forall n \in \Nb)(d(x,y) < 2^{-h(n)} \imp d(f(x),f(y)) < 2^{-n}).
\end{align*}
\end{Definition}

\section{Reversing the strong Tietze extension theorem to weak K\"onig's lemma}

In their analysis of $\stet$, Giusto and Simpson first show that $\rca \nvdash \stet$ by showing that $\stet$ fails in $\rec$, the model of $\rca$ whose first-order part is the standard natural numbers and whose second-order part is the recursive sets~\cite[Lemma~6.16]{GiustoSimpson}.  To do this, they take advantage of Theorem~\ref{thm-WKLCompact}, the fact that $\wkl$ fails in $\rec$, and the fact that $\rca$ proves that a continuous real-valued function on $[0,1]$ has a modulus of uniform continuity if and only if it has a \emph{Weierstra{\ss} approximation} (see~\cite[Theorem~IV.2.4]{SimpsonSOSOA}).  Here, a Weierstra{\ss} approximation of a continuous function $f \colon [0,1] \imp \Rb$ is a sequence of polynomials $\la p_n : n \in \Nb \ra$ from $\Qb[x]$ such that $(\forall n \in \Nb)(\forall x \in [0,1])(|f(x) - p_n(x)| < 2^{-n})$.

The goal in proving that $\rec \not\models \stet$ is thus to produce a recursive code for a closed and separably closed $C \subseteq [0,1]$, a recursive code for a continuous $f \colon C \imp \Rb$, and a recursive modulus of uniform continuity for $f$ such that no continuous extension of $f$ to $[0,1]$ has a recursive Weierstra{\ss} approximation.  To this end, let $I_e = [2^{-(2e+1)}, 2^{-2e}]$ for each $e \in \Nb$, and let $D = \{0\} \cup  \bigcup_{e \in \Nb}I_e$.  We call $D$ the \emph{pre-domain} of $f$, as $C \subseteq D$ is obtained from $D$ by enumerating additional open intervals into the complement of $C$.  The plan is to define $f(0) = 0$, then for each $e \in \Nb$ to define $C$ and $f$ on $I_e$ to diagonalize against $\Phi_e$ being a Weierstra{\ss} approximation to an extension of $f$.  Thus on each $I_e$ we implement the following strategy.  First, by the fact that Theorem~\ref{thm-WKLCompact} item~(4) fails in $\rec$, fix a recursive enumeration $\la (a_k, b_k) : k \in \Nb \ra$ of open intervals with rational endpoints that covers (the recursive reals in) $[0,1]$ but has no finite subcover.  Transfer this cover to a cover $\la (a^e_k, b^e_k) : k \in \Nb \ra$ of $I_e$ that has no finite subcover by the linear transformation $x \mapsto (x+1)/2^{2e+1}$.  Enumerate the intervals of $\la (a^e_k, b^e_k) : k \in \Nb \ra$ into the complement of $C$ until a stage $s$ is reached that witnesses $\Phi_{e,s}(2e+1)\da = p$, where $p$ is (a code for) a polynomial in $\Qb[x]$.  If $\Phi_e(2e+1)\ua$, then $s$ is never found, and all the intervals in the sequence $\la (a^e_k, b^e_k) : k \in \Nb \ra$ are enumerated into the complement of $C$.  In this case, $I_e$ is erased from the domain of $f$, so we do not need to take any action to define $f$ there.  If $s$ is found, then at stage $s$ only the intervals of $\la (a^e_k, b^e_k) : k < s \ra$ have been enumerated into the complement of $C$.  We then stop the enumeration, which makes $C \cap I_e = I_e \setminus \bigcup_{k < s}(a^e_k, b^e_k)$.  As no finite set of intervals from $\la (a^e_k, b^e_k) : k \in \Nb \ra$ covers $I_e$, we can find a rational $q \in I_e \setminus \bigcup_{k < s}(a^e_k, b^e_k)$.  Then we define $f$ on $I_e \setminus \bigcup_{k < s}(a^e_k, b^e_k)$ by making it be constantly $2^{-2e}$ if $p(q) \leq 0$ and making it be constantly $-2^{-2e}$ otherwise.  In both cases we ensure that $|f(q) - p(q)| \geq 2^{-2e}$, which successfully diagonalizes against $\Phi_e$ because if $\Phi_e$ were a Weierstra{\ss} approximation to an extension of $f$, then we would have $|f(q) - p(q)| < 2^{-(2e+1)}$.  Furthermore, $C$ is closed and separably closed by Lemma~\ref{lem-SepClosed} below, and it is easy to write down a modulus of uniform continuity for $f$.

Our plan to prove that $\rca + \stet \vdash \wkl$ is to formalize and elaborate upon the preceding argument.  Observe, however, that the above argument relies very heavily on the fact that $[0,1]$ is not Heine-Borel compact in $\rec$.  To replicate this style of argument, we appeal to Theorem~\ref{thm-WKLCompact} and work in $\rca + \neg\wkl$.  The overall strategy is thus to produce the contradiction $\rca + \neg\wkl + \stet \vdash \wkl$.

Let $g_0, g_1 \colon \Nb \imp \Nb$ be two injections with disjoint ranges.  By Lemma~\ref{lem-sep}, we wish to separate the ranges of $g_0$ and $g_1$ using $\stet$.  A first idea would be to follow the proof that $\rec \not\models \stet$ and use $I_e$ to code whether or not $e$ should be in a separating set.  Enumerate the intervals of $\la (a^e_k, b^e_k) : k \in \Nb \ra$ into the complement of $C$ until a stage $s$ is reached that witnesses either $g_0(s) = e$ or $g_1(s) = e$.  If $g_0(s) = e$, then define $f$ to be $2^{-2e}$ on the remaining portion of $I_e$; and if $g_1(s) = e$, then define $f$ to be $-2^{-2e}$ on the remaining portion of $I_e$.  The idea would then be to decode a separating set from an extension $F$ of $f$ by checking whether or not $F$ is $\geq 0$ on $I_e$.  The problem is of course that not every $F(q)$ for $q \in I_e$ correctly codes whether or not $e$ should be in a separating set.  We would need to find a $q \in I_e$ that is sufficiently close to a member of $C$, where the meaning of `sufficiently close' is determined by $F$'s modulus of uniform continuity.

We refine this idea by replacing each $I_e$ with a sequence of disjoint closed intervals $\la I_{e,m} : m \in \Nb \ra$ where the length of each $I_{e,m}$ is at most $2^{-m}$, and we choose a rational $q_{e,m} \in I_{e,m}$ for each $e, m \in \Nb$.  The pre-domain for our $f$ is $\{0\} \cup \bigcup_{e,m \in \Nb}I_{e,m}$.  The refined strategy is to implement the above na\"ive coding plan for $I_e$ on each interval $I_{e,m}$.  In the end, if $e$ is in the range of $g_0$ or $g_1$, then $C \cap I_{e,m}$ is non-empty for every $m \in \Nb$.  So in this case, for every $m \in \Nb$, $q_{e,m}$ is a point in $I_{e,m}$ that is within $2^{-m}$ of a point in $C$.  Thus we are able to decode whether or not $e$ should be in a separating set from an extension of $f$ and the extension's modulus of uniform continuity.

The first lemma says that the closed sets we consider are also separably closed.  It is implicit in~\cite{GiustoSimpson}, but we make it explicit as a matter of convenience.

\begin{Lemma}[$\rca$]\label{lem-SepClosed}
If $\la J_e : e \in \Nb \ra$ is a sequence of pairwise disjoint closed sub-intervals of $[0,1]$ with rational endpoints such that $C = \{0\} \cup \bigcup_{e \in \Nb}J_e$ is closed, then $C$ is also separably closed.
\end{Lemma}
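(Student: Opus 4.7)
The plan is to exhibit a sequence $\la x_k : k \in \Nb \ra$ of points in $[0,1]$ whose separably-closed coding is exactly $C$. Begin with $x_0 = 0$, and continue by enumerating all pairs $\la e, q \ra$ with $e \in \Nb$ and $q \in \Qb \cap J_e$; since each $J_e$ has rational endpoints, this enumeration is $\Delta^0_1$ uniformly in $\la J_e : e \in \Nb \ra$, and hence available in $\rca$.

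For the inclusion that every element of $C$ lies in the set coded by $\la x_k \ra$, observe that $0$ is itself a term of the sequence, and any $x \in J_e$ is approximable by rationals in $J_e$, all of which appear as terms. For the reverse inclusion, suppose $x \in [0,1]$ satisfies $(\forall q \in \Qb^+)(\exists k)(d(x, x_k) < q)$ but $x \notin C$. Then $x$ lies in the open set $U$ coded by the closed-set code for $C$, so there exist a rational $a$ and $r \in \Qb^+$ with $d(x, a) < r$ and every point within distance $r$ of $a$ belonging to $U$. Choosing $q \in \Qb^+$ with $q < r - d(x, a)$ and picking $k$ with $d(x, x_k) < q$ yields $d(x_k, a) < r$, so $x_k \in U$; but each $x_k$ is either $0$ or a rational in some $J_e \subseteq C$, so $x_k \in C$, contradicting $x_k \in U$.

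The only subtlety is keeping the two codings of $C$ straight: the statement $x_k \in C$ refers to the given closed-set code, while the separably-closed membership of $x$ is defined via the sequence $\la x_k \ra$, and the argument bridges them using the set-theoretic equality $C = \{0\} \cup \bigcup_e J_e$ furnished by the hypothesis. Otherwise the verification is routine within $\rca$, with no appeal to induction beyond $\Sigma^0_1$.
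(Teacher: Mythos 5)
Your proof is correct and takes essentially the same approach as the paper: enumerate $0$ together with the rationals in each $J_e$ (which is $\Delta^0_1$ since the $J_e$ have rational endpoints), then argue that any $x \notin C$ sits inside a coded open ball that misses every term of the sequence. The only cosmetic difference is that you phrase the converse inclusion as a proof by contradiction, whereas the paper argues directly by shrinking to an interval $(x - 1/m, x + 1/m)$ contained in the complement of $C$.
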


\begin{proof}
Let $Q = \la q_n : n \in \Nb \ra$ be an enumeration of the rationals in $\{0\} \cup \bigcup_{e \in \Nb}J_e$.  We show that the closure of $Q$ is $C$.  Clearly $0$ is in the closure of $Q$, and if $x \in J_e$ it is easy to see that $x$ is in the closure of the rationals in $J_e$.  Conversely, suppose that $x \notin \{0\} \cup \bigcup_{e \in \Nb}J_e$.  Then $x$ is in some open interval $(a,b)$ contained in the complement of $C$.  By shrinking this interval, we can find an $m \in \Nb \setminus \{0\}$ such that $(x-1/m, x+1/m)$ is contained in the complement of $C$.  Thus $\forall n (|x - q_n| \geq 1/m)$, so $x$ is not in the closure of $Q$.
\end{proof}
We remark that in Lemma~\ref{lem-SepClosed}, $Q$ can even be taken to be a set of rationals, rather than a sequence of rationals.  Let $Q$ contain $0$ and the set of rationals $q$ such that there is an $e$ less than $q$'s code with $q \in J_e$.

The next lemma prepares $f$'s pre-domain $\{0\} \cup \bigcup_{e,m \in \Nb}I_{e,m}$.

\begin{Lemma}[$\rca + \neg\wkl$]\label{lem-DomainHelper}
For each $e \in \Nb$, let $I_e = [2^{-(2e+1)}, 2^{-2e}]$.  Then there are pairwise disjoint closed intervals with rational endpoints $\la I_{e,m} : e,m \in \Nb \ra$, rationals $\la q_{e,m} : e,m \in \Nb \ra$, and open intervals with rational endpoints $\la (a^{e,m}_k, b^{e,m}_k) : e,m,k \in \Nb \ra$ such that
\begin{itemize}
\item[(i)] $\{0\} \cup \bigcup_{e,m \in \Nb}I_{e,m}$ is closed;
\item[(ii)] $q_{e,m} \in I_{e,m}$;
\item[(iii)] $I_{e,m} \subseteq I_e$, and the length of $I_{e,m}$ is less than $2^{-m}$;
\item[(iv)] $\la (a^{e,m}_k, b^{e,m}_k) : k \in \Nb \ra$ is an open cover of $I_{e,m}$ with no finite subcover;
\item[(v)] if $\la e,m \ra \neq \la e',m' \ra$, then $I_{e,m}$ and $(a^{e',m'}_k, b^{e',m'}_k)$ are disjoint.
\end{itemize}
\end{Lemma}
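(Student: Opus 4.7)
The plan is to invoke $\neg\wkl$ just once via Theorem~\ref{thm-WKLCompact}, fixing a sequence $\la(\alpha_k, \beta_k) : k \in \Nb\ra$ of rational open intervals that covers $[0,1]$ but admits no finite subcover. All of the families in the conclusion are then produced uniformly from $\la(\alpha_k, \beta_k)\ra$ inside $\rca$: the $I_{e,m}$'s come from an explicit dyadic recipe, and each cover $\la(a^{e,m}_k, b^{e,m}_k) : k \in \Nb\ra$ is obtained by transporting $\la(\alpha_k, \beta_k)\ra$ along a rational-affine bijection onto a narrow neighborhood of $I_{e,m}$.

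The main obstacle is arranging the $I_{e,m}$'s so that $\{0\} \cup \bigcup_{e,m}I_{e,m}$ is closed: countably many pairwise disjoint closed subintervals of $I_e$ cannot exhaust $I_e$, so every accumulation point of $\bigcup_m I_{e,m}$ must already lie in some $I_{e,m}$. My plan is to fix, for each $e$, the midpoint $y_e = 3 \cdot 2^{-(2e+2)}$ of $I_e$ as the designated accumulation point, set $I_{e,0} = [y_e, 2^{-2e}]$ so that $y_e$ is included, and for $m \geq 1$ take $I_{e,m}$ to be the middle third of $[s_{e,m-1}, s_{e,m}]$, where $s_{e,m} = y_e - 2^{-(2e+m+2)}$ is a geometric sequence of rationals climbing from $s_{e,0} = 2^{-(2e+1)}$ up to $y_e$. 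The middle-thirds structure yields pairwise disjoint closed rational-endpoint intervals inside $I_e$, with positive gaps between consecutive members, with lengths well below $2^{-m}$, and whose only accumulation point is $y_e$ itself, which lies in $I_{e,0}$. Taking $q_{e,m}$ to be the midpoint of $I_{e,m}$ then gives (ii) and (iii); for (i), the complement of $\{0\} \cup \bigcup_{e,m}I_{e,m}$ in $\Rb$ is the recursively enumerable union of the gaps within each $I_e$ together with the gaps between consecutive $I_e$'s and everything outside $[0,1]$, so the set is closed in the sense of $\rca$.

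For (iv) and (v), write $I_{e,m} = [\ell_{e,m}, r_{e,m}]$ and choose a rational $\delta_{e,m} > 0$ small enough that the open interval $S_{e,m} = (\ell_{e,m} - \delta_{e,m}, r_{e,m} + \delta_{e,m})$ is disjoint from every other $I_{e', m'}$; the explicit gap sizes in the middle-thirds construction let us take $\delta_{e,m}$ to be a power of $2$ below the distance from $I_{e,m}$ to its nearest neighbours. Let $T_{e,m} \colon [0,1] \imp I_{e,m}$ be the affine order-preserving bijection with rational coefficients, and define $(a^{e,m}_k, b^{e,m}_k) = (T_{e,m}(\alpha_k), T_{e,m}(\beta_k)) \cap S_{e,m}$. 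Since $\la(\alpha_k, \beta_k)\ra$ covers $[0,1]$, its image under $T_{e,m}$ covers $T_{e,m}([0,1]) = I_{e,m}$, and intersecting with $S_{e,m} \supseteq I_{e,m}$ preserves this. A finite subcover of $I_{e,m}$ by $\la(a^{e,m}_k, b^{e,m}_k)\ra$ would pull back through $T_{e,m}^{-1}$ to a finite subcover of $[0,1]$ by $\la(\alpha_k, \beta_k)\ra$, contradicting the choice of the template; this yields (iv). Finally (v) is immediate: $(a^{e,m}_k, b^{e,m}_k) \subseteq S_{e,m}$, and by construction $S_{e,m} \cap I_{e', m'} = \emptyset$ whenever $\la e', m'\ra \neq \la e, m\ra$.
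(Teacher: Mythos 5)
There is a genuine gap, and it is located exactly where you tried to save work: the explicit ``middle-thirds'' placement of the $I_{e,m}$ inside $I_e$ is incompatible with conditions (iv) and (v). In your layout the intervals $I_{e,m'}$, $m' \geq 1$, accumulate at $y_e$ from the left, and $y_e$ is the left endpoint of $I_{e,0}$. Condition (iv) forces $\la (a^{e,0}_k, b^{e,0}_k) : k \in \Nb \ra$ to cover $I_{e,0}$, so some $(a^{e,0}_k, b^{e,0}_k)$ contains $y_e$ and hence contains an interval $(y_e - \epsilon, y_e]$; that interval meets $I_{e,m'}$ for all sufficiently large $m'$, violating (v). Your claim that one can choose $\delta_{e,m} > 0$ with $S_{e,m}$ disjoint from every other $I_{e',m'}$ is false for $m = 0$: the distance from $I_{e,0}$ to $\bigcup_{m' \geq 1} I_{e,m'}$ is $0$. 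This cannot be repaired by relocating the accumulation point outside all the $I_{e,m}$, since then (i) would fail. Indeed, the tension is structural: (iv) and (v) together say that each $I_{e,m}$ has an open neighborhood (the union of its cover) disjoint from all the others, and such a ``discrete'' infinite family of closed intervals with closed union inside the compact set $I_e$ is outright impossible whenever $I_e$ is Heine--Borel compact. So any correct proof must use $\neg\wkl$ already in the \emph{placement} of the $I_{e,m}$, not only in manufacturing the covers of item (iv); an explicit $\rca$-recipe for the $I_{e,m}$ cannot work.

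The paper's proof does exactly this: it first transfers the bad cover $\la (a_k,b_k) \ra$ of $[0,1]$ to a bad cover $\la (a^e_k, b^e_k) \ra$ of $I_e$, and then chooses $I_{e,m+1}$ recursively inside the as-yet-uncovered remainder $D_{e,m} = I_e \setminus \bigcup_{k \leq k_{e,m}}(a^e_k,b^e_k)$, which is nonempty precisely because the cover has no finite subcover. The pieces $\bigcup_{k \leq k_{e,m}}(a^e_k,b^e_k) \setminus \bigcup_{n \leq m} I_{e,n}$ are enumerated as open intervals $U_{e,m}$ disjoint from \emph{all} the $I_{e,n}$, and since every point of $I_e$ lies in some $(a^e_k,b^e_k)$, every point outside $\bigcup_m I_{e,m}$ is eventually enumerated into the complement; this is how (i) and (v) are reconciled. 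Your treatment of (ii), (iii), and the transfer-plus-truncation argument for (iv) and the non-existence of finite subcovers are fine and essentially match the paper, but the construction of the $I_{e,m}$ themselves has to be redone along the paper's lines.
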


\begin{proof}
By $\neg\wkl$ in the form of the negation of Theorem~\ref{thm-WKLCompact} item~(4), fix an open cover $\la (a_k, b_k) : k \in \Nb \ra$ of $[0,1]$ by open intervals with rational endpoints that has no finite subcover.  By adjusting the endpoints of the intervals as necessary, assume that $(\forall k \in \Nb)(-2^{-2} < a_k < b_k < 1+2^{-2})$.  For each $e \in \Nb$, transfer $\la (a_k, b_k) : k \in \Nb \ra$ to $I_e$ via the linear transformation $x \mapsto (x+1)/2^{2e+1}$, and denote the transferred sequence of intervals by $\la (a^e_k, b^e_k) : k \in \Nb \ra$.  Notice that if $e \neq e'$ then $(a^e_k, b^e_k)$ and $(a^{e'}_{k'}, b^{e'}_{k'})$ are disjoint for all $k$ and $k'$.

The procedure described below is clearly uniform in $e$, so we think of fixing an $e \in \Nb$ and enumerating
\begin{itemize}
\item $\la I_{e,m} : m \in \Nb \ra$;
\item $\la q_{e,m} : m \in \Nb \ra$;
\item helper pairwise disjoint open intervals with rational endpoints $\la (c^{e,m}, d^{e,m}) : m \in \Nb \ra$ (used to later define $\la (a^{e,m}_k, b^{e,m}_k) : k \in \Nb \ra$ for each $m$); and
\item an increasing sequence of indices $\la k_{e,m} : m \in \Nb \ra$ and a sequence of finite sets of open intervals with rational endpoints $\la U_{e,m} : m \in \Nb \ra$ such that, for all $m \in \Nb$, $\bigcup_{n \leq m}I_{e,n} \subseteq \bigcup_{k \leq k_{e,m}}(a^e_k, b^e_k)$ and $\bigcup_{k \leq k_{e,m}}(a^e_k, b^e_k) \setminus \bigcup_{n \leq m}I_{e,n} = \bigcup_{n \leq m}\bigcup_{O \in U_{e,n}}O$.
\end{itemize}

To start, let $k_{e,0}$ be the least $k$ such that $(a^e_k, b^e_k)$ intersects $I_e$, and choose an open interval $(c^{e,0}, d^{e,0}) \subseteq I_e \cap (a^e_{k_{e,0}}, b^e_{k_{e,0}})$.  Now choose a closed interval $I_{e,0} \subseteq (c^{e,0}, d^{e,0})$ of length at most $2^{-0}$ and a rational $q_{e,0} \in I_{e,0}$.  Enumerate (the at most finitely many intervals coding) $\bigcup_{k \leq k_{e,0}}(a^e_k, b^e_k) \setminus I_{e,0}$ into $U_{e,0}$.  The construction proceeds in this manner.  Suppose at stage $m+1$ we have $\la I_{e,n} : n \leq m \ra$, $\la q_{e,n} : n \leq m \ra$, $\la (c^{e,n}, d^{e,n}) : n \leq m \ra$, $\la k_{e,n} : n \leq m \ra$, and $\la U_{e,n} : n \leq m \ra$.  The set $D_{e,m} = I_e \setminus \bigcup_{k \leq k_{e,m}}(a^e_k, b^e_k)$ must be a finite union of closed intervals where at least one of the intervals is non-degenerate (i.e., not a point) because no finite collection of intervals from $\la (a^e_k, b^e_k) : k \in \Nb \ra$ covers $I_e$.  Let $k_{e, m+1}$ be the least $k$ such that $(a^e_k, b^e_k)$ intersects a non-degenerate component interval of $D_{e,m}$, and choose an open interval $(c^{e,m+1}, d^{e,m+1}) \subseteq D_{e,m} \cap (a^e_{k_{e, m+1}}, b^e_{k_{e, m+1}})$, a closed interval $I_{e,m+1} \subseteq (c^{e,m+1}, d^{e,m+1})$ of length at most $2^{-(m+1)}$, and a rational $q_{e,m+1} \in I_{e,m+1}$.  Enumerate (the at most finitely many intervals coding) $\bigcup_{k \leq k_{e, m+1}}(a^e_k, b^e_k) \setminus \bigcup_{n \leq m+1} I_{e,n}$ into $U_{e,m+1}$.

Immediately we see that (ii)~and~(iii) are satisfied.  For~(i), consider the closed set $C$ described by the simultaneous enumeration of the open intervals $\la (2^{-(2e+2)}, 2^{-(2e+1)}) : e \in \Nb \ra$ and the open intervals in $\bigcup_{e,m \in \Nb}U_{e,m}$.  Suppose that $x \in \{0\} \cup \bigcup_{e,m \in \Nb}I_{e,m}$.  If $x = 0$, then it is clear that $x \in C$.  If $x \in I_{e,m}$, then $x$ is in no interval of the form $(2^{-(2e+2)}, 2^{-(2e+1)})$, and it is in no interval $O \in \bigcup_{n \in \Nb}U_{e',n}$ for an $e' \neq e$.  Furthermore, $x$ is in no interval $O \in \bigcup_{n \in \Nb}U_{e,n}$ either.  This is because when $I_{e,m}$ is defined at stage $m$ for $e$, $I_{e,m}$ is chosen disjoint from the intervals in $\bigcup_{n<m}U_{e,n}$, and at stages $n \geq m$ the intervals added to $U_{e,n}$ are chosen to be disjoint from $I_{e,m}$.  Hence $x \in C$.  Conversely, suppose that $x \notin \{0\} \cup \bigcup_{e,m \in \Nb}I_{e,m}$.  If $x$ is not in any $I_e$ for $e \in \Nb$, then clearly $x \notin C$.  So suppose that $x \in I_e$.  Let $k$ be such that $x \in (a^e_k, b^e_k)$, and let $m$ be such that $k < k_{e,m}$.  Then $x \in \bigcup_{k \leq k_{e,m}}(a^e_k, b^e_k) \setminus \bigcup_{n \leq m}I_{e,n}$, so $x \in \bigcup_{n \leq m}\bigcup_{O \in U_{e,n}}O$.  Thus $C = \{0\} \cup \bigcup_{e,m \in \Nb}I_{e,m}$, establishing~(i).

To establish~(iv), for each $e, m \in \Nb$, transfer $\la (a_k, b_k) : k \in \Nb \ra$ to $I_{e,m}$ via the linear transformation that maps $0$ to the left endpoint of $I_{e,m}$ and maps $1$ to the right endpoint of $I_{e,m}$.  Denote the transferred sequence of intervals by $\la (a^{e,m}_k, b^{e,m}_k) : k \in \Nb \ra$.  To also ensure~(v), intersect each interval $(a^{e,m}_k, b^{e,m}_k)$ with $(c^{e,m}, d^{e,m})$, which suffices because $I_{e,m} \subseteq (c^{e,m}, d^{e,m})$, and the intervals of $\la (c^{e,m}, d^{e,m}) : m \in \Nb \ra$ are pairwise disjoint.
\end{proof}

\begin{Theorem}\label{thm-sTETWKL}
$\rca + \stet \vdash \wkl$.
\end{Theorem}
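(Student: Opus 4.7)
The plan is to work in $\rca + \stet + \neg\wkl$ and derive a contradiction. By Lemma~\ref{lem-sep}, it suffices to produce, from an arbitrary pair of injections $g_0, g_1 \colon \Nb \imp \Nb$ with disjoint ranges, a set $X$ separating their ranges. Given such $g_0, g_1$, I would first invoke Lemma~\ref{lem-DomainHelper} (whose proof uses $\neg\wkl$) to fix pairwise disjoint closed intervals $\la I_{e,m} : e,m \in \Nb \ra$, rationals $q_{e,m} \in I_{e,m}$, and open covers $\la (a^{e,m}_k, b^{e,m}_k) : k \in \Nb \ra$ of each $I_{e,m}$ with no finite subcover, so that $D = \{0\} \cup \bigcup_{e,m} I_{e,m}$ is closed and every cover interval lies inside its own $I_{e,m}$.

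The closed set $C \subseteq D$ and the continuous function $f \colon C \imp \Rb$ are then built together by a stage-by-stage construction. At stage $s$, for each pair $(e,m)$ with $e,m \leq s$ that has not yet halted, check whether $g_0(s) = e$ or $g_1(s) = e$; if neither, enumerate $(a^{e,m}_s, b^{e,m}_s)$ into the open complement of $C$. If at some stage $s$ a witness appears for some $e$, the enumeration halts for every $(e,m)$ sharing that $e$, and we commit to defining $f$ to be constantly $+2^{-2e}$ (respectively $-2^{-2e}$) on whatever portion of $C$ remains in each $I_{e,m}$. The code $\Phi$ for $f$ is assembled by enumerating, for each halted $(e,m)$, pairs $\la \la q, r \ra, \la \pm 2^{-2e}, t \ra \ra$ for rationals $q \in I_{e,m} \setminus \bigcup_{k \leq s}(a^{e,m}_k, b^{e,m}_k)$ and $r$ small enough that the ball $B(q,r)$ lies entirely in that region; pairs witnessing continuity at $0$ are added separately using the estimate $|f(x)| \leq 2^{-2e}$ on $I_e$. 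Then $C$ is closed (being the complement of the enumerable open sets), and the rationals just placed into $\Phi$ together with $0$ form a dense sequence in $C$, so $C$ is also separably closed.

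The delicate verification is that $f$ admits a modulus of uniform continuity $h$. On $C \cap I_e$ the function is identically $\pm 2^{-2e}$ (the sign depending only on $e$, since halting is triggered by $e$ alone), and the gap between consecutive $I_e$ and $I_{e+1}$ equals $2^{-(2e+2)}$. A short computation then shows that if $|x - y| < 2^{-(n+2)}$ with $x, y \in C$ and $e_0 = \lceil n/2 \rceil + 1$, then either both points lie in $\{0\} \cup \bigcup_{e \geq e_0} I_e$, in which case $|f(x) - f(y)| \leq 2 \cdot 2^{-2e_0} < 2^{-n}$, or else they lie in the same $I_e$ for $e < e_0$, in which case $f(x) = f(y)$. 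Hence $h(n) = n + 2$ serves as a modulus of uniform continuity.

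Applying $\stet$ now yields a continuous extension $F \colon [0,1] \imp \Rb$ of $f$ with a modulus of uniform continuity $H$, and I define $X = \{e : p_e > 0\}$, where $p_e$ is any rational approximation of $F(q_{e, H(2e+1)})$ to within $2^{-(2e+2)}$; this $X$ is $\Delta^0_1$ in the parameters $F$, $H$, and $\la q_{e,m} : e, m \in \Nb \ra$. If $e = g_0(s)$ for some $s$, then halting at stage $s$ leaves $C \cap I_{e, H(2e+1)}$ nonempty; any point $c$ in it satisfies $|q_{e, H(2e+1)} - c| < 2^{-H(2e+1)}$ by Lemma~\ref{lem-DomainHelper}(iii), hence $|F(q_{e,H(2e+1)}) - 2^{-2e}| < 2^{-(2e+1)}$ and therefore $F(q_{e,H(2e+1)}) > 2^{-(2e+1)}$, forcing $p_e > 2^{-(2e+2)} > 0$ and $e \in X$. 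Symmetrically $e \in \ran(g_1)$ forces $e \notin X$, so $X$ separates $\ran(g_0)$ from $\ran(g_1)$, yielding $\wkl$ by Lemma~\ref{lem-sep} and contradicting $\neg\wkl$. I expect the main obstacle to be the bookkeeping that the quintuples enumerated into $\Phi$ really satisfy the three coherence axioms of Definition~\ref{def-continuous} and that the gap computation genuinely delivers the claimed modulus — neither step is deep, but both require careful attention to constants.
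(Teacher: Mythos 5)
Your proposal follows essentially the same strategy as the paper: invoke Lemma~\ref{lem-sep} to reduce $\wkl$ to $\Sigma^0_1$-separation, construct the pre-domain via Lemma~\ref{lem-DomainHelper} under $\neg\wkl$, build $C$ and $f$ by enumerating cover-intervals into the complement of $C$ on $I_{e,m}$ until a witness $g_i(s)=e$ appears, and decode a separating set from the extension $F$ by evaluating it near $q_{e,m}$ for $m$ chosen according to $H$. The auxiliary parameters differ (you use $m = H(2e+1)$ where the paper uses $H(2e+2)$, and you define membership via $p_e > 0$ where the paper uses $\geq 0$), but these are cosmetic and your decoding estimate goes through.

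There is, however, a genuine gap in your verification that $h(n) = n+2$ is a modulus of uniform continuity for $f$. You claim that if $x, y \in C$ with $|x-y| < 2^{-(n+2)}$ and $e_0 = \lceil n/2 \rceil + 1$, then either both points lie in $\{0\} \cup \bigcup_{e \geq e_0} I_e$ or both lie in the same $I_e$ with $e < e_0$. For odd $n$ this dichotomy is not exhaustive: the gap between $I_{e_0-1}$ and $I_{e_0}$ equals $2^{-2e_0} = 2^{-(n+3)}$, which is strictly less than $2^{-(n+2)}$, so a point of $C \cap I_{e_0-1}$ can be within $2^{-(n+2)}$ of a point of $C \cap I_{e_0}$ without either case applying. (Concretely, with $n=1$: $e_0 = 2$, $I_1 = [1/8, 1/4]$, $I_2 = [1/32, 1/16]$, and the gap is $1/16 < 1/8 = 2^{-(n+2)}$.) The modulus $h(n) = n+2$ does in fact work, but one must also handle the straddling case directly: if $x \in I_{e'}$, $y \in I_e$ with $e' > e > n/2$, then $|f(x)-f(y)| \le 2^{-2e} + 2^{-2e'} \le 5\cdot 2^{-(2e+2)} < 2^{-n}$. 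Alternatively, the paper sidesteps this case analysis entirely by taking the coarser modulus $h(n) = 2n+2$: then $e \le n$ forces $x$ and $y$ into the same $I_e$, and $e > n$ gives $|f(x)-f(y)| \le 2^{-2n-1} < 2^{-n}$ with room to spare.
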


\begin{proof}
We derive the contradiction $\rca + \neg\wkl + \stet \vdash \wkl$.  Let $g_0, g_1 \colon \Nb \imp \Nb$ be injections with disjoint ranges.  Our goal is to separate the ranges of $g_0$ and $g_1$.

For each $e \in \Nb$, let $I_e = [2^{-(2e+1)}, 2^{-2e}]$.  By $\neg\wkl$, let $\la I_{e,m} : e,m \in \Nb \ra$, $\la q_{e,m} : e,m \in \Nb \ra$, and $\la (a^{e,m}_k, b^{e,m}_k) : e,m,k \in \Nb \ra$ be as in Lemma~\ref{lem-DomainHelper}, and let $D$ denote the closed set $\{0\} \cup \bigcup_{e,m \in \Nb}I_{e,m}$.  The plan is to define a continuous function $f$ with a modulus of uniform continuity on a closed and separably closed subset of $D$ such that if $F$ is a continuous extension of $f$ to $[0,1]$ with a modulus of uniform continuity, then, for each $e \in \Nb$, the value of $F(q_{e,m})$, for an $m$ chosen according to $e$ and $F$'s modulus of uniform continuity, codes whether or not $e$ should be in a separating set.

Let $E$ denote the closed set whose complement is coded by
\begin{align*}
\{(a^{e,m}_k, b^{e,m}_k) : e,m \in \Nb \andd (\forall s \leq k)(g_0(s) \neq e \andd g_1(s) \neq e)\}.
\end{align*}

Let $C$ be the closed set $D \cap E$.  Notice that for each $e,m \in \Nb$, either $I_{e,m}$ and $E$ are disjoint (if $\forall s (g_0(s) \neq e \andd g_1(s) \neq e)$) or $I_{e,m} \cap E$ is a finite union of closed intervals with rational endpoints (if $\exists s (g_0(s) = e \orr g_1(s) = e)$).  Thus $C$ is of the form $\{0\} \cup \bigcup_{e \in \Nb}J_e$ for $\la J_e : e \in \Nb \ra$ a sequence of pairwise disjoint closed intervals with rational endpoints.  Therefore $C$ is also separably closed by Lemma~\ref{lem-SepClosed}.

We now define the continuous function $f$ with modulus of uniform continuity $h$ to which we apply $\stet$.  Let
\begin{align*}
f(x) = 
\begin{cases}
0 & \text{if $x = 0$}\\
2^{-2e} & \text {if $x \in I_e \cap C \andd \exists s(g_0(s) = e)$}\\
-2^{-2e} & \text {if $x \in I_e \cap C \andd \exists s(g_1(s) = e)$}.
\end{cases}
\end{align*}

To do this, for each $e,m \in \Nb$, wait while the intervals from $\la (a^{e,m}_k, b^{e,m}_k) : k \in \Nb \ra$ covering $I_{e,m}$ are being enumerated into the complement of $C$.  If this enumeration never stops, then $I_{e,m}$ is disjoint from $C$ and $f$ is not defined on $I_{e,m}$.  If this enumeration stops at some stage $s$, then either $g_0(s) = e$ or $g_1(s) = e$, and $I_{e,m} \cap C$ is determined at this stage.  Thus the appropriate pairs of intervals can start being enumerated into the code for $f$ to define $f(x) = 2^{-2e}$ on $I_{e,m} \cap C$ if $g_0(s) = e$ and $f(x) = -2^{-2e}$ on $I_{e,m} \cap C$ if $g_1(s) = e$.

Let $h \colon \Nb \imp \Nb$ be the function $h(n) = 2n+2$.  We show that $h$ is a modulus of uniform continuity for $f$.  Suppose that $x < y$ are in $C$ and satisfy $|x-y| < 2^{-(2n+2)}$.  If $y \in I_e$ for an $e \leq n$, then $|x-y| < 2^{-(2n+2)}$ implies that $x$ must also be in $I_e$, which means that $|f(x) - f(y)| = 0 < 2^{-n}$.  If $y \in I_e$ for an $e > n$, then $|f(y)| = 2^{-2e}$ and $|f(x)| \leq 2^{-2e}$, so $|f(x) - f(y)| \leq 2^{-2e+1} < 2^{-n}$.  Thus $h$ is a modulus of uniform continuity for $f$.

By $\stet$, let $F$ be a continuous extension of $f$ to $[0,1]$ with modulus of uniform continuity $H$.  Define a set $X$ as follows.  Given $e \in \Nb$, let $m = H(2e+2)$, and use $F$ to approximate $F(q_{e,m})$ to within $2^{-(2e+2)}$ (i.e., find a rational $q$ such that $|F(q_{e,m}) - q| < 2^{-(2e+2)}$).  Define $e \in X$ if and only if this approximation is $\geq 0$.  This $X$ separates the ranges of $g_0$ and $g_1$.  Suppose $\exists s(g_0(s) = e)$.  Then $I_{e,m} \cap C \neq \emptyset$ and $F(x) = f(x) = 2^{-2e}$ on $I_{e,m} \cap C$.  As $q_{e,m} \in I_{e,m}$ and $I_{e,m}$ has length at most $2^{-m} = 2^{-H(2e+2)}$, it must be that $|F(q_{e,m})-2^{-2e}| \leq 2^{-(2e+2)}$.  Thus if $q$ approximates $F(q_{e,m})$ to within $2^{-(2e+2)}$, then $|q - 2^{-2e}| \leq 2^{-(2e+1)}$, which implies that $q$ is positive and hence that $e \in X$.  Similarly, if $\exists s(g_1(s) = e)$, then any approximation of $F(q_{e,m})$ to within $2^{-(2e+2)}$ must be within $2^{-(2e+1)}$ of $-2^{-2e}$ and thus must be negative, which implies that $e \notin X$.
\end{proof}

\begin{Corollary}
Conjecture~\ref{conj-sTET} is true.
\end{Corollary}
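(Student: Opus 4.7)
The plan is to derive the five-way equivalence of Conjecture~\ref{conj-sTET} by combining Theorem~\ref{thm-sTETWKL} with implications that are either recorded in the introduction or essentially immediate from the phrasings of (1)--(5). Since (5) is literally $\stet$, Theorem~\ref{thm-sTETWKL} supplies the reversal $(5)\imp(1)$, and the remaining implications together close the cycle.

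For the direction starting from $\wkl$, I would appeal to two standard facts noted in the introduction: the Tietze extension theorem for closed sets (without any uniform continuity requirement) is provable in $\rca$, and $\wkl$ proves that every continuous real-valued function on a compact complete separable metric space has a modulus of uniform continuity. Given the hypotheses of (2), first apply the $\rca$-provable Tietze theorem to produce a continuous extension $F \colon X \imp \Rb$ of $f$, and then invoke the $\wkl$-provable uniform continuity result to equip $F$ with a modulus of uniform continuity on the compact space $X$. This yields $(1)\imp(2)$. The implications $(2)\imp(3)$, $(2)\imp(4)$, $(3)\imp(5)$, and $(4)\imp(5)$ are immediate by restriction, since (3), (4), and (5) are each literally special cases of (2): they either require $C$ to be closed and separably closed rather than merely closed, or restrict $X$ to $[0,1]$, or both.

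Assembling these pieces produces the cycle $(1)\imp(2)\imp(4)\imp(5)\imp(1)$ together with the secondary path $(2)\imp(3)\imp(5)$, which together establish that all five statements are equivalent over $\rca$. I do not expect a substantive obstacle at this stage: the hard technical content is already packaged inside Theorem~\ref{thm-sTETWKL}, and the only point needing care is that the specialization implications among (2)--(5) genuinely go through in the formalized second-order arithmetic framework. This is unproblematic because the codings of the space, the closed set, the function, and the modulus do not change when one restricts $X$ to $[0,1]$ or strengthens the hypothesis on $C$, so any witness to a more general statement directly serves as a witness to the corresponding special case.
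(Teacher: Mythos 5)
Your proposal is correct and matches the paper's own argument: both use the $\rca$-provable Tietze theorem for closed sets together with the $\wkl$-provable uniform continuity of continuous functions on compact spaces to get $(1)\imp(2)$, observe that (3), (4), (5) are specializations, and close the loop with Theorem~\ref{thm-sTETWKL} for $(5)\imp(1)$. The only cosmetic difference is that you route $(1)\imp(2)\imp(3),(4),(5)$ while the paper states $(1)$ implies each of (2)--(5) directly; the content is the same.
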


\begin{proof}
First, in Conjecture~\ref{conj-sTET}, (1) implies (2), (3), (4), and (5) as explained in the introduction:  $\rca$ proves the Tietze extension theorem for closed sets (i.e., the version without uniform continuity; see~\cite[Theorem~II.7.5]{SimpsonSOSOA}), and $\wkl$ proves that continuous functions on compact complete separable metric spaces have moduli of uniform continuity (see~\cite[Theorem~IV.2.2]{SimpsonSOSOA}).  Next, each of (2), (3), and (4) implies (5) because (5) is a special case of each of (2), (3), and (4).  Finally, (5) implies (1) by Theorem~\ref{thm-sTETWKL}.
\end{proof}

\section*{Acknowledgments}
We thank Steven Van den Bulcke and Hans Vernaeve for bringing Conjecture~\ref{conj-sTET} to our attention.

\bibliographystyle{amsplain}
\bibliography{TietzeExtension}

\end{document}